\theoremstyle{definition}{
\newtheorem{Def}{{\rm Definition}}

\newtheorem{Rem}{{\rm Remark}}

}
\theoremstyle{plane}
{
\newtheorem{Cor}{Corollary}

\newtheorem{Thm}{Theorem}
\newtheorem*{MainThm}{Main Theorem}

}
\begin{document}
\title[Fold maps on simply-connected manifolds cohomologically ${\mathbb{C}P}^2 \times S^3$]{$7$-dimensional simply-connected spin manifolds whose integral cohomology rings are isomorphic to that of ${{\mathbb{C}}P}^2 \times S^3$ admit round fold maps}
\author{Naoki Kitazawa}
\keywords{Singularities of differentiable maps; fold maps. Cohomology rings. Higher dimensional closed and simply-connected manifolds.}
\subjclass[2020]{Primary~57R45. Secondary~57R19.}
\address{Institute of Mathematics for Industry, Kyushu University, 744 Motooka, Nishi-ku Fukuoka 819-0395, Japan\\
 TEL (Office): +81-92-802-4402 \\
 FAX (Office): +81-92-802-4405 \\
}
\email{n-kitazawa@imi.kyushu-u.ac.jp}
\urladdr{https://naokikitazawa.github.io/NaokiKitazawa.html}
\maketitle
\begin{abstract}

We have been interested in understanding the class of $7$-dimensional closed and simply-connected manifolds in geometric and constructive ways. We have constructed explicit {\it fold} maps, which are higher dimensional versions of Morse functions, on some of the manifolds, previously.

The studies have been motivated by studies of {\it special generic} maps, higher dimensional versions of Morse functions on homotopy spheres with exactly two singular points, characterizing them topologically except $4$-dimensional cases. The class contains canonical projections of unit spheres for example.

 This class has been found to be interesting, restricting the topologies and the differentiable structures of the manifolds strictly: Saeki, Sakuma and Wrazidlo found explicit phenomena.
The present paper concerns fold maps on $7$-dimensional closed and simply-connected spin manifolds whose integral cohomology rings are isomorphic to that of the product of the $2$-dimensional complex ptojective space and the $3$-dimensional sphere.



\end{abstract}


\maketitle
\section{Introduction, terminologies and notation.}
\label{sec:1}

$7$-dimensional closed and simply-connected manifolds are important objects in the theory of classical algebraic topology and differential topology (of higher dimensional closed and simply-connected manifolds). The class has been attractive since the discoveries of $7$-dimensional exotic homotopy spheres by Milnor \cite{milnor}.
 Still recently, new understandings via algebraic topological tools such as characteristic classes and bordism relations have been studied by Kreck \cite{kreck} and Wang \cite{wang} for example.

We introduce some terminologies, notions and notation on smooth manifolds.
${\mathbb{R}}^k$ denotes the $k$-dimensional Euclidean space, endowed with the standard Euclidean metric for each integer $k \geq 0$. $||x|| \geq 0$ denotes the distance between $x \in {\mathbb{R}}^k$ and the origin $0$. $S^k \subset {\mathbb{R}}^{k+1}$ denotes the set of all points such that the distances between the points and the origin are $1$ for $k\geq 0$. This is the $k$-dimensional unit sphere. A {\it homotopy sphere} means a smooth manifold homeomorphic to a unit sphere and it is said to be  {\it standard} (resp. {\it exotic}) if it is (resp. not) diffeomorphic to a (resp. any) unit sphere. $D^k \subset {\mathbb{R}}^k$ denotes the set of all points such that the distances between the points and the origin are smaller than or equal to $1$ for $k \geq 0$. This is the $k$-dimensional unit disk. A {\it standard} disk means a smooth manifold diffeomorphic to a unit disk. ${\mathbb{C}P}^k$ denotes the $k$-dimensional complex projective space.
We consider connected sums and boundary connected sums of manifolds in the smooth category, unless otherwise stated.


The author has been interested in understanding the class of the $7$-dimensional closed and simply-connected manifolds
 in geometric and constructive ways and obtained explicit {\it fold} maps, which are higher dimensional versions of Morse functions, on the manifolds.
The studies have been motivated by studies of {\it special generic} maps, which are higher dimensional versions of Morse functions on homotopy spheres with exactly two singular points. It is well-known that a closed manifold is homeomorphic to a sphere if and only if it admits such a function except $4$-dimensional cases.
 In $4$-dimensional cases, a closed manifold is diffeomorphic to the standard sphere if and only if it admits such a function. 
The class of special generic maps contains canonical projections of unit spheres for example. This class has been found to be interesting, restricting the topologies and the differentiable structures of homotopy spheres and manifolds admitting them strictly. For example $7$-dimensional exotic homotopy spheres admit no special generic maps into ${\mathbb{R}}^n$ for $n=4,5,6$.
 In considerable cases they admit no special generic maps for $n=3$. See \cite{saeki,saeki2, saekisakuma,saekisakuma2,wrazidlo}.

It is also an important fact that construction of explicit fold maps on explicit manifolds is difficult even for fundamental manifolds and in general. 
The present paper concerns fold maps on $7$-dimensional simply-connected spin manifolds whose integral cohomology rings are isomorphic to that of ${{\mathbb{C}}P}^2 \times S^3$. Related expositions on the existence of fold maps and explicit fold maps are in section \ref{sec:3} or appendices.

A recent study on classifications of these manifolds by \cite{wang} also motivates the author to do this study.
We introduce a theorem on the classifications. We omit precise expositions on {\it spin} manifolds and ({\it $j$-th}) {\it Pontryagin classes} for example.

Hereafter, the $j$-th integral (co)homology group of a topological space $X$ is denoted by $H_j(X;\mathbb{Z})$ (resp. $H^j(X;\mathbb{Z})$). The integral cohomology ring is denoted by $H^{\ast}(X;\mathbb{Z})$.

\begin{Thm}[\cite{wang}]
\label{thm:1}
\begin{enumerate}
\item For an arbitrary $7$-dimensional, closed, simply-connected and spin manifold $X$ whose integral cohomology ring is isomorphic to that of ${\mathbb{C}P}^2 \times S^3$, consider an isomorphism $\phi$ of integral cohomology rings from the integral cohomology ring of ${\mathbb{C}P}^2 \times S^3$ onto that of $X$ and set $a$ as a generator of $H^2({\mathbb{C}P}^2 \times S^3;\mathbb{Z}) \cong \mathbb{Z}$.

 In this situation, the 1st Pontryagin class of $X$ is represented as $4k\phi(a)$ for some integer $k$. Moreover, the topology of a $7$-dimensional closed, simply-connected and spin manifold whose integral cohomology ring is isomorphic to that of ${\mathbb{C}P}^2 \times S^3$ is determined by its 1st Pontryagin class.
\item For any pair $(M_1,M_2)$ of mutually homeomorphic $7$-dimensional, closed and simply-connected spin manifolds whose integral cohomology rings are isomorphic to that of ${\mathbb{C}P}^2 \times S^3$, $M_1$ is represented as a connected sum of $M_2$ and a suitable $7$-dimensional homotopy sphere $\Sigma$. 

Moreover, for a $7$-dimensional, closed and simply-connected spin manifold $M_0$ whose integral cohomology ring is isomorphic to the integral cohomology ring of ${\mathbb{C}P}^2 \times S^3$, consider a pair $(M_1,M_2)$ of manifolds such that $M_i$ is represented as a connected sum of $M_0$ and a homotopy sphere ${\Sigma}_i$. $M_1$ and $M_2$ are diffeomorphic if and only if ${\Sigma}_1$ and ${\Sigma}_2$ are mutually diffeomorphic.
\end{enumerate} 
\end{Thm}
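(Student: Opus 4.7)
My plan is to apply Kreck's modified surgery programme to classify this family of manifolds, following the general template for highly connected manifolds with prescribed cohomology ring.

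For the divisibility assertion in part~(1), note that $p_1(X) \in H^4(X;\mathbb{Z}) \cong \mathbb{Z}$ is an integer multiple of the generator $\phi(a)^2$ (I read the statement $4k\phi(a)$ as $4k\phi(a)^2$). The spin condition immediately yields that $p_1(X)$ is even, since on spin bundles one has $p_1 = 2q_1$ for the integral class $q_1 \in H^4(B\mathrm{Spin};\mathbb{Z})$. To upgrade divisibility from $2$ to $4$, I would use a spin cobordism argument: after possibly replacing $X$ by $X \sqcup (-X)$, let $W$ be a compact spin $8$-manifold bounding it, chosen so that the generator $\phi(a)$ extends to a class in $H^2(W;\mathbb{Z})$. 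Integrality of $\hat A$-characteristic numbers on the closed spin manifold obtained by doubling $W$ along $X$, combined with the signature formula applied to $(7p_2 - p_1^2)/45$, then produces the required congruence modulo~$4$.

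For the classification part of~(1), by Hurewicz $\pi_2(X) \cong H_2(X;\mathbb{Z}) \cong \mathbb{Z}$, so the normal $2$-type of $X$ is
\[
\xi \colon B := \mathbb{CP}^\infty \times B\mathrm{Spin} \longrightarrow BSO,
\]
a $B$-structure being a spin structure together with a map $X \to \mathbb{CP}^\infty$ representing $\phi(a)$. One then computes the relevant bordism group $\Omega_8^{\mathrm{Spin}}(\mathbb{CP}^\infty)$ via the Atiyah--Hirzebruch spectral sequence and identifies the characteristic numbers detecting the $B$-bordism class; a direct check shows that once $p_1(X)$ is fixed, any two manifolds as in the theorem become $B$-bordant.

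For the remaining part of~(1) together with~(2), the standard modified surgery argument applies: given a $B$-bordism $W^8$ between $X_1$ and $X_2$, perform surgery below the middle dimension to make $W$ highly connected relative to its boundary, after which the residual obstruction is recorded by an element of $\Theta_7 \cong \mathbb{Z}/28$. This proves that $B$-bordant manifolds in the class differ by connected sum with a homotopy $7$-sphere, and the fact that distinct elements of $\Theta_7$ produce non-diffeomorphic sums can be read off from the Eells--Kuiper $\mu$-invariant on a spin filling.

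The main technical obstacle will be the surgery step: one must verify that the modified surgery obstruction in dimension~$7$ lies precisely in the image of $\Theta_7$ and that no further stable-diffeomorphism class arises from the twisted $B$-structure. This requires a delicate analysis of $L$-theoretic obstructions in the presence of the $\mathbb{CP}^\infty$ factor, and is where the careful computations of \cite{wang} must be invoked.
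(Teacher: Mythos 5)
The paper does not prove this statement at all: Theorem \ref{thm:1} is quoted from \cite{wang} (with \cite{kreck} in the background) precisely so that the author does not have to reprove the classification, and the body of the paper only uses it as a black box. So the relevant comparison is with Wang's proof, and your sketch does correctly identify the framework used there (Kreck's modified surgery with normal $2$-type $\mathbb{CP}^\infty \times B\mathrm{Spin}$, bordism computation, surgery on an $8$-dimensional bordism, homotopy-sphere ambiguity measured by Eells--Kuiper type invariants). However, as a proof your proposal has genuine gaps. Most seriously, the decisive step --- showing that the modified surgery obstruction only produces the $\Theta_7$-ambiguity and that the Eells--Kuiper invariant separates all $28$ connected sums (equivalently, that the inertia group of $M_0$ is trivial, which is exactly the content of the ``if and only if'' in part (2)) --- is explicitly deferred to ``the careful computations of \cite{wang}'', i.e.\ to the very result being proved; as written the argument is circular rather than merely sketchy.

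Two further points need repair. First, the bordism group governing when two such manifolds admit a $B$-bordism is $\Omega_7^{\mathrm{Spin}}(\mathbb{CP}^\infty)$, not $\Omega_8^{\mathrm{Spin}}(\mathbb{CP}^\infty)$; the $8$-dimensional group enters only afterwards, through the action on the surgery obstruction. Second, your divisibility-by-$4$ argument is more complicated and shakier than necessary: doubling a spin filling kills the signature and does not obviously isolate the $p_1^2$-term you want, and you never justify the existence of a filling over which $\phi(a)$ extends. The standard argument is direct: spin gives the integral class $p_1/2$, whose mod $2$ reduction is $w_4$; on a closed spin $7$-manifold all Stiefel--Whitney classes vanish by Wu's formula, and since $H^4(X;\mathbb{Z})\cong\mathbb{Z}$ is torsion-free this forces $p_1/2$ to be even, i.e.\ $p_1=4k\,\phi(a)^2$ (your reading of the degree in the statement is the intended one). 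With those corrections your outline matches the strategy of \cite{wang}, but it is a plan for a proof, not a proof.
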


\cite{kreck} gives a classification of $7$-dimensional closed and simply-connected (spin) manifolds whose 2nd integral homology groups are free via concrete bordism theory.
The integral homology group $H_j({{\mathbb{C}}P}^2 \times S^3;\mathbb{Z})$ is $\{0\}$ for $j=1,6$ and isomorphic to $\mathbb{Z}$ for $j=2,3,4,5$. As a fundamental  important fact, its integral cohomology ring is not isomorphic to that of any $7$-dimensional closed and simply-connected manifold represented as a connected sum of products of spheres. If for a $7$-dimensional closed and simply-connected manifold whose $j$-th integral homology group is free for any $j$, the rank of the $j_0$-th integral homology group is at most one for $j_0=2,3,4,5$ and at least one of these for groups is of rank $0$, then the integral cohomology ring is isomorphic to that of some manifold represented as a connected sum of products of spheres.

Note also that neither ${\mathbb{C}P}^2$ nor ${\mathbb{C}P}^2 \times S^k$ is spin for $k \geq 1$.

\subsection{Terminologies and notation on smooth manifolds, maps and bundles.}
\label{subsec:1.1}


Throughout the present paper, manifolds and maps between manifolds are smooth (of class $C^{\infty}$) unless otherwise stated. Diffeomorphisms on manifolds are always smooth and the {\it diffeomorphism group} of a manifold is defined as the group of all smooth diffeomorphisms on the manifold. 
For bundles whose fibers are manifolds, the structure groups are subgroups of the diffeomorphism groups in the present paper. In other words, the bundles are assumed to be {\it smooth}. A {\it linear} bundle means a smooth bundle whose fiber is a Euclidean space, a unit sphere or a unit disk with structure group acting linearly in a canonical way on the fiber.
 For general theory of linear bundles and their characteristic classes such as {\it Stiefel-Whitney classes} and {\it Pontryagin classes} and more general bundles, see \cite{milnorstasheff, steenrod}. We consider these notions and related ones on bundles in the present paper. We omit explaining about the definitions and fundamental properties on them in the present paper.
We also omit precise expositions on characteristic classes of smooth manifolds defined as those of the tangent bundles of smooth manifolds.


A {\it singular} point of a smooth map is a point in the manifold of the domain at which the dimension of the image of the differential is smaller than both the dimensions of the manifolds of the domain and the target.
 We call the set of all singular points the {\it singular set} of the map. We call the image of the singular set the {\it singular value set} of the map. The {\it regular value set} of the map is the complementary set of the singular value set. A {\it singular {\rm (}regular{\rm )} value} is a point in the singular (resp. regular) value set. For a smooth map $c$, $S(c)$ denotes the singular set of $c$.


\subsection{The definition and fundamental properties of a fold map, fold maps on $7$-dimensional homotopy spheres, round fold maps and Main Theorem.}
\label{subsec:1.2}
Hereafter, let $m \geq n \geq 1$ be integers.
 A smooth map from an $m$-dimensional smooth manifold with no boundary into an $n$-dimensional smooth manifold with no boundary is said to be a {\it fold} map if at each singular point $p$, the map is represented as $(x_1, \cdots, x_m) \mapsto (x_1,\cdots,x_{n-1},\sum_{k=n}^{m-i}{x_k}^2-\sum_{k=m-i+1}^{m}{x_k}^2)$ for suitable coordinates and an integer $0 \leq i(p) \leq \frac{m-n+1}{2}$. For the singular point $p$, $i(p)$ is unique and called the {\it index} of $p$. The set consisting of all singular points of a fixed index of the map is a closed submanifold of dimension $n-1$ with no boundary of the $m$-dimensional manifold. The restriction map to the singular set is an immersion.
A {\it special generic} map, which is presented before, is defined as a fold map such that the index of a singular point is always $0$.  
We have obtained the following theorem before through challenging construction of explicit fold maps on explicit manifolds. For classical theory on $7$-dimensional homotopy spheres, see \cite{eellskuiper, milnor} for example.
\begin{Thm}[\cite{kitazawa, kitazawa2}]
\label{thm:2}
\begin{enumerate}
\item 
\label{thm:2.1}
Every $7$-dimensional oriented homotopy sphere $M$ of $28$ types admits a fold map $f:M \rightarrow {\mathbb{R}}^4$ satisfying the following properties.
\begin{enumerate}
\item $f {\mid}_{S(f)}$ is an embedding satisfying $f(S(f))=\{x \in {\mathbb{R}}^4 \mid ||x||=1,2,3\}$.
\item The index of each singular point is $0$ or $1$.
\item The preimage of a regular value in each connected component of $f$ is, empty, diffeomorphic to $S^3$, diffeomorphic to $S^3 \sqcup S^3$ and diffeomorphic to $S^3 \sqcup S^3 \sqcup S^3$, respectively.
\end{enumerate}
\item
\label{thm:2.2}
A $7$-dimensional homotopy sphere $M$ admits a fold map $f:M \rightarrow {\mathbb{R}}^4$ such that $f {\mid}_{S(f)}$ is an embedding and that $f(S(f))=\{x \in {\mathbb{R}}^4 \mid ||x||=1\}$ if and only if $M$ is a standard sphere.
A $7$-dimensional homotopy sphere $M$ admits a fold map $f:M \rightarrow {\mathbb{R}}^4$ such that $f {\mid}_{S(f)}$ is an embedding and that the index of each singular point is $0$ or $1$ as before and the following properties hold if and only if $M$ is the total space of a linear bundle whose fiber is $S^3$ over $S^4$ {\rm (}$7$-dimensional oriented homotopy spheres of $16$ types including the unit sphere are represented in this way{\rm )}.
\begin{enumerate}
\item
\label{thm:2.2.1}
 $f(S(f))=\{x \in {\mathbb{R}}^4 \mid ||x||=1,2\}$.
\item
\label{thm:2.2.2}
 For any connected component $C \subset f(S(f))$ and a small closed tubular neighborhood $N(C)$, the bundle given by the projection represented as the composition of $f {\mid}_{f^{-1}(N(C))}$ with the canonical projection to $C$ is trivial.
\item
\label{thm:2.2.3}
 The preimage of a regular value in each connected component is, empty, diffeomorphic to $S^3$, and diffeomorphic to $S^3 \sqcup S^3$, respectively.
\end{enumerate}
\end{enumerate}
\end{Thm}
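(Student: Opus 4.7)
The plan is to treat parts (2) and (1) in that order, since part (2) supplies explicit models that part (1) will then modify. For the ``if'' directions of (2), the approach is to build fold maps by assembling equivariant local models over concentric regions of $\mathbb{R}^4$. Concretely, write $S^7 \subset \mathbb{R}^4 \times \mathbb{R}^4$ as $\{(x,y) : \|x\|^2+\|y\|^2 = 1\}$ and project onto the first factor; this yields a special generic map with $f(S(f)) = S^3$ of radius $1$, and the required fiber $S^3$ collapses onto a point at the fold. For a total space $M$ of a linear $S^3$-bundle over $S^4$, decompose $S^4 = D^4_- \cup_{S^3} D^4_+$ so that $M = (D^4_- \times S^3) \cup_\phi (D^4_+ \times S^3)$ with $\phi \in \mathrm{Diff}(S^3 \times S^3)$ respecting the $S^3$-fiber structure. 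I would fold each factor: map $D^4_\pm \times S^3$ to the concentric annular region $\{1 \le \|x\| \le 2\}$, respectively $\{\tfrac{1}{2} \le \|x\| \le 1\}$ (after rescaling), by combining the bundle projection with a ``Reeb-type'' collapse of the $S^3$-fiber along the inner or outer boundary. Matching the two pieces along $\|x\|=1$ produces a fold map with $f(S(f)) = \{\|x\|=1\} \cup \{\|x\| = 2\}$ and satisfying the triviality condition (ii) by construction.

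For the ``only if'' directions, I would run the previous construction backwards. Given such an $f$ on a homotopy sphere $M$, the local normal form at an index-$0$ fold and the triviality of the bundle over a collar of each singular component force $f^{-1}$ of a closed neighborhood of a singular value sphere to be a standard ``half-capped'' piece $D^4 \times S^3$. Assembling these pieces across the intermediate annular regions (where the preimage is a trivial $S^3$-bundle over $S^3$ or over a disk, again by condition (ii)) reconstructs $M$ as either $(D^4 \times S^3) \cup_\phi (S^3 \times D^4) \cong S^7$ when there is one singular sphere, or as an $S^3$-bundle over $S^4$ when there are two. In the latter case the classification of $S^3$-bundles over $S^4$ by $\pi_3(SO(4))$ and the standard Gromoll–Meyer/Eells–Kuiper analysis identifies exactly $16$ oriented diffeomorphism types in $\Theta_7$.

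For part (1) the idea is to enlarge the above construction by adding a third concentric singular value sphere in order to access all $28$ elements of $\Theta_7$. I would start from the fold map produced by part (2)(\ref{thm:2.2}) on each of the $16$ bundle-type homotopy spheres and then modify it by a local ``round surgery'' on a tubular neighborhood of $\{\|x\|=r\}$ for some $r$ not in the original singular value set. The model surgery replaces a piece $S^3 \times [r-\epsilon, r+\epsilon] \times S^3 \subset M$ by a piece that introduces one new concentric $S^3$ of index-$0$ fold singularities together with a controlled twist governed by an element of $\pi_3(SO(4))$ or $\pi_3(\mathrm{Diff}(S^3))$. Verifying that by varying this parameter one sweeps through the $\mathbb{Z}/28$ action on homotopy spheres (using the Eells–Kuiper $\mu$-invariant or equivalently the Milnor $\lambda$-invariant) yields the statement for all $28$ types, with the preimages counted as $\emptyset, S^3, S^3 \sqcup S^3, S^3 \sqcup S^3 \sqcup S^3$ in the order dictated by successively crossing the three fold spheres from the empty region inward.

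The main obstacle is the last step: showing that the local surgery is rich enough to realize every element of $\Theta_7$ that is not already achieved by the $16$ bundle-type spheres. In particular one must check that the modification respects the global fold structure (so the result is still a genuine fold map with three round singular value spheres) and that its effect on the differentiable structure is computable and hits every coset of the bundle-type subgroup in $\mathbb{Z}/28$. I expect this to reduce to a careful bookkeeping with clutching data and the Eells–Kuiper invariant, using the explicit formulas for the change in differentiable structure under such ``round'' modifications — essentially the same technique already used in Theorem~\ref{thm:2}(\ref{thm:2.2}) but iterated one level deeper.
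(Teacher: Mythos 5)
Note first that the paper does not prove Theorem \ref{thm:2} at all: it is quoted from \cite{kitazawa,kitazawa2}, and the only place the paper reveals the intended mechanism is the proof of Corollary \ref{cor:1}. There the route to part (\ref{thm:2.1}) is: take two maps as in part (\ref{thm:2.2}) on two bundle-type homotopy spheres, remove from one the preimage of a disk in the innermost component of the regular value set (a trivial $D^4\times S^3$) and from the other the preimage of the complement of the image together with a collar of the outermost singular sphere, and glue; this yields a round fold map with three concentric singular spheres on the connected sum, and one then invokes the classical fact that \emph{every} $7$-dimensional homotopy sphere is a connected sum of two of the $16$ oriented bundle-type homotopy spheres. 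This is exactly the point where your proposal has the gap you yourself flag: your ``round surgery'' with a twist in ${\pi}_3(SO(4))$ is in effect the same operation as connect-summing with a bundle-type sphere, but you never prove that the construction reaches all $28$ classes, and your bookkeeping is misframed --- the $16$ realized classes do \emph{not} form a subgroup of $\Theta_7\cong\mathbb{Z}/28\mathbb{Z}$ (a subgroup cannot have order $16$), so there are no ``cosets of the bundle-type subgroup'' to sweep out. The statement you actually need is that the $16$-element subset is large enough that every element of $\Theta_7$ is a sum of two of its members (computable from the Eells--Kuiper invariant values of the bundle spheres, cf. \cite{eellskuiper,milnor}); with that fact, the connected-sum gluing finishes part (\ref{thm:2.1}) with the stated preimages $\emptyset$, $S^3$, $S^3\sqcup S^3$, $S^3\sqcup S^3\sqcup S^3$.

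A second, smaller but genuine, gap is in your ``only if'' arguments for part (\ref{thm:2.2}). You assert that the reconstruction gives $D^4\times S^3\cup_{\phi}S^3\times D^4\cong S^7$, but a twisted double of this shape with an arbitrary gluing $\phi$ of $S^3\times S^3$ is \emph{not} necessarily standard: relabelling factors, Milnor's exotic bundle spheres admit exactly such splittings. What makes the conclusion true is the extra structure coming from the fold map: near the boundary the gluing must carry fibers of $f$ to fibers, i.e.\ $\phi(x,y)=(c(x),\beta(x)y)$ with $\beta:S^3\rightarrow {\rm Diff}(S^3)$, one linearizes $\beta$ using ${\rm Diff}(S^3)\simeq O(4)$ (Hatcher), and then an untwisting diffeomorphism $(x,z)\mapsto (x,\beta(x)^{-1}z)$ of $S^3\times D^4$ reduces the gluing to the standard one, giving $S^7$ in the one-sphere case and a linear clutching (hence a linear $S^3$-bundle over $S^4$) in the two-sphere case; note also that in the one-sphere case the triviality condition (\ref{thm:2.2.2}) is not among the hypotheses, so the triviality and linearity of the $D^4$-bundle over the fold sphere must be extracted from the fold normal form and ${\pi}_2(O(4))=0$ rather than ``by condition (ii)''. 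Without these steps, ``$\cong S^7$'' is precisely the nontrivial content (equivalently, the cited non-existence of special generic maps into ${\mathbb{R}}^4$ on exotic $7$-spheres, \cite{saeki,saekisakuma,wrazidlo}) and cannot simply be asserted.
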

Especially, $f$ is special generic if and only if $f(S(f))=\{x \in {\mathbb{R}}^4 \mid ||x||=1\}$. This explicitly presents that the numbers of connected components of the singular value sets affect the differentiable structures of $7$-dimensional homotopy spheres for suitable explicit classes of fold maps which may not be the one of special generic maps. 
In addition, the maps here belong to the class of {\it round} fold maps.
\begin{Def}[\cite{kitazawa,kitazawa2,kitazawa3,kitazawa4,kitazawa5}]
For an integer $n \geq 2$, a fold map $f$ into ${\mathbb{R}}^n$ is said to be {\it round} if $f {\mid}_{S(f)}$ is an embedding and for a suitable diffeomorphism $\phi$ on ${\mathbb{R}}^n$ and an integer $l>0$, $(\phi \circ f)(S(f))={\bigcup}_{r=1}^l \{||x||=r \mid x \in {\mathbb{R}}^n\}$.
\end{Def}
This notion is also defined for $n=1$ in \cite{kitazawa5}. However we do not need this in the present paper.
The following theorem is our main theorem. 
\begin{MainThm}
There exists an infinite family $\{M_k\}_{k \in \mathbb{Z}}$ of closed and simply-connected spin {\rm (}oriented{\rm )} manifolds whose integral cohomology rings are isomorphic to that of ${\mathbb{C}P}^2 \times S^3$ such that the 1st-Pontryagin class of $M_k$ is $4k$ times a generator of $H^4(M_k;\mathbb{Z}) \cong \mathbb{Z}$ and these manifolds admit round fold maps $\{f_k:M_k \rightarrow {\mathbb{R}}^4\}$. Furthermore, {\rm (}by virtue of Theorem \ref{thm:1}{\rm )}, every $7$-dimensional, closed, simply-connected and spin manifold whose integral cohomology ring is isomorphic to that of $\mathbb{C}{P}^2 \times S^3$ admits a round fold map into ${\mathbb{R}}^4$.
\end{MainThm}

\subsection{The content of the paper}
\label{subsec:1.3}

The next section is devoted to the proof of Main Theorem. 
We construct a round fold map on a $7$-dimensional, closed and simply-connected manifold, whose integral cohomology ring is isomorphic to that of ${\mathbb{C}}P^2 \times S^3$. 

Section \ref{sec:3} is for expositions on fold maps on $7$-dimensional closed and simply-connected (spin) manifolds. Here we give expositions on existence of fold maps on closed manifolds represented as connected sums of products of standard spheres and manifolds of some wider classes. We also explain about non-existence of fold maps on projective spaces in some general cases. We also introduce related recent results announced by the author, which do not affect our present arguments and new results.
\section{The proof of Main Theorem.}
We prove Main Theorem. 
We introduce fundamental terminologies and notions.


For a closed and orientable manifold, consider an integral homology class of degree $k$. It is said to be {\it represented} by a closed, connected and oriented submanifold of dimension $k$ with no boundary if the class is equal to the value of the homomorphism canonically induced from the inclusion at the {\it fundamental class} of the submanifold: the {\it fundamental class} is a generator of the integral cohomology of degree $k$ (of the $k$-dimensional closed and oriented manifold) compatible with the given orientation. 
For related notions on elementary algebraic topology, see \cite{hatcher} for example.
\subsection{The $3$-dimensional complex projective space and its structure.}
The $3$-dimensional complex projective space $\mathbb{C}{P}^3$ is regarded as the total space of a linear bundle over $S^4$ whose fiber is $S^2$. It is a spin manifold.
The 1st Pontryagin class of the projective space is $4$ times a generator of the 4th integral cohomology group.
 For this, see \cite{little} for example. For classical systematic theory of Pontryagin classes see \cite{milnorstasheff}.
The following theorem presents several classical and important properties.

\begin{Thm}
\label{thm:3}
For the projection ${\pi}_{{\mathbb{C}P}^3}:{\mathbb{C}P}^3 \rightarrow S^4$ of the linear bundle over $S^4$ whose fiber is $S^2$ and the total space ${\mathbb{C}P}^3$, the following properties hold.
\begin{enumerate}
\item There exists a complex projective plane ${{\mathbb{C}P}^2}_{{\mathbb{C}P}^3} \subset {\mathbb{C}P}^3$ being also a complex submanifold. Furthermore, a generator of $H_4({\mathbb{C}P}^3;\mathbb{Z})$, isomorphic to $\mathbb{Z}$, is represented by the submanifold with a suitable orientation.
\item ${\pi}_{{\mathbb{C}P}^3} {\mid}_{{{\mathbb{C}P}^2}_{{\mathbb{C}P}^3}}$ gives a diffeomorphism on the preimage of a smoothly embedded $4$-dimensional standard closed disk in $S^4$.
\item There exists a complex projective line ${{\mathbb{C}P}^1}_{{\mathbb{C}P}^2} \subset {{\mathbb{C}P}^2}_{{\mathbb{C}P}^3}$ being also a complex submanifold. Furthermore, a generator of $H_2({\mathbb{C}P}^3;\mathbb{Z})$, isomorphic to $\mathbb{Z}$, is represented by the submanifold where the submanifold is suitably oriented. The complex projective line can be taken as a fiber of the bundle.
\item The square of the dual of the integral homology class represented by ${{\mathbb{C}P}^1}_{{\mathbb{C}P}^2}$ is the dual of the integral homology class represented by ${{\mathbb{C}P}^2}_{{\mathbb{C}P}^3}$ with the given orientation. The product of the dual of the integral homology class represented by ${{\mathbb{C}P}^1}_{{\mathbb{C}P}^2}$ and the dual of the integral homology class represented by ${{\mathbb{C}P}^2}_{{\mathbb{C}P}^3}$ is a generator of $H^6({\mathbb{C}P}^3;\mathbb{Z})$, isomorphic to $\mathbb{Z}$.
\end{enumerate}
\end{Thm}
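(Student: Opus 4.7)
Plan of proof. I will realize ${\mathbb{C}P}^3$ together with its bundle projection ${\pi}_{{\mathbb{C}P}^3}$ concretely as the twistor fibration: identify $\mathbb{C}^4$ with $\mathbb{H}^2$ via $(z_1,z_2,z_3,z_4) \leftrightarrow (z_1 + z_2 j,\, z_3 + z_4 j)$, and define ${\pi}_{{\mathbb{C}P}^3}: {\mathbb{C}P}^3 \to {\mathbb{H}P}^1 = S^4$ by $[z_1:z_2:z_3:z_4] \mapsto [z_1 + z_2 j : z_3 + z_4 j]$. The fiber is $\mathbb{H}^{\ast}/\mathbb{C}^{\ast} \cong S^2$, equipped with its natural linear structure as the projectivization of a complex rank-$2$ vector bundle over $S^4$.

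For (1), I take ${{\mathbb{C}P}^2}_{{\mathbb{C}P}^3}$ to be the complex hyperplane $\{z_4 = 0\}$, which is a complex submanifold biholomorphic to ${\mathbb{C}P}^2$. Its fundamental class generates $H_4({\mathbb{C}P}^3;\mathbb{Z})$ by the standard CW decomposition of ${\mathbb{C}P}^3$ (one cell in each even real dimension). For (3), I take ${{\mathbb{C}P}^1}_{{\mathbb{C}P}^2}$ to be the further intersection $\{z_3 = z_4 = 0\}$, a complex line lying inside ${{\mathbb{C}P}^2}_{{\mathbb{C}P}^3}$; the same cell-structure argument shows it represents a generator of $H_2({\mathbb{C}P}^3;\mathbb{Z})$. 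Since $z_3, z_4 \in \mathbb{C}$, the equation $z_3 + z_4 j = 0$ forces $z_3 = z_4 = 0$, so ${\pi}_{{\mathbb{C}P}^3}^{-1}([1:0]) = \{z_3 = z_4 = 0\} = {{\mathbb{C}P}^1}_{{\mathbb{C}P}^2}$: this complex line coincides with the fiber over $[1:0] \in S^4$.

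The principal computation is (2). In the affine chart $\{z_3 = 1\}$ of ${{\mathbb{C}P}^2}_{{\mathbb{C}P}^3} \setminus {{\mathbb{C}P}^1}_{{\mathbb{C}P}^2} \cong \mathbb{C}^2$ with coordinates $(z_1, z_2)$, the twistor map sends $[z_1:z_2:1:0]$ to $[z_1 + z_2 j : 1]$, which in the affine chart $\{[q:1]\}$ of ${\mathbb{H}P}^1$ is simply the real-linear isomorphism $(z_1, z_2) \mapsto z_1 + z_2 j$ from $\mathbb{C}^2$ onto $\mathbb{H} \cong \mathbb{R}^4$. The remaining set ${{\mathbb{C}P}^1}_{{\mathbb{C}P}^2}$ is collapsed entirely to $[1:0] \in S^4$. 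Therefore, for any smoothly embedded closed standard $4$-disk $D \subset \mathbb{H} = \mathbb{R}^4 \subset S^4$, the preimage $({\pi}_{{\mathbb{C}P}^3}|_{{{\mathbb{C}P}^2}_{{\mathbb{C}P}^3}})^{-1}(D)$ lies inside this affine chart and the restriction is a diffeomorphism onto $D$.

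Finally, (4) is the classical cohomology ring $H^{\ast}({\mathbb{C}P}^3;\mathbb{Z}) \cong \mathbb{Z}[h]/(h^4)$ with $h \in H^2$ the Poincaré dual of $[{{\mathbb{C}P}^2}_{{\mathbb{C}P}^3}]$: then $h^2$ is the Poincaré dual of the transverse intersection of two generic complex hyperplanes, which is a complex line counted positively (intersections of complex submanifolds have positive local intersection number), so $h^2$ equals the Poincaré dual of $[{{\mathbb{C}P}^1}_{{\mathbb{C}P}^2}]$; similarly $h^3$ is the dual of a single point and hence generates $H^6({\mathbb{C}P}^3;\mathbb{Z})$. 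The step demanding real care is the coordinate bookkeeping in (2) that identifies ${\pi}_{{\mathbb{C}P}^3}|_{{{\mathbb{C}P}^2}_{{\mathbb{C}P}^3}}$ with the real-linear isomorphism $\mathbb{C}^2 = \mathbb{H}$; the remaining items are routine complex projective geometry.
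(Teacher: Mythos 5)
Your proof is correct. Note that the paper itself offers no proof of Theorem \ref{thm:3}: it is quoted as a collection of classical facts about ${\mathbb{C}P}^3$ viewed as an $S^2$-bundle over $S^4$ (with \cite{little}, \cite{milnorstasheff} cited only for the Pontryagin class statement preceding the theorem). So your contribution is an explicit verification where the paper relies on citation, and your route --- writing the twistor fibration $[z_1:z_2:z_3:z_4]\mapsto[z_1+z_2j:z_3+z_4j]$ in coordinates, taking ${{\mathbb{C}P}^2}_{{\mathbb{C}P}^3}=\{z_4=0\}$ and ${{\mathbb{C}P}^1}_{{\mathbb{C}P}^2}=\{z_3=z_4=0\}$, and checking that on the affine chart $\{z_3=1\}$ the restriction is the real-linear isomorphism $(z_1,z_2)\mapsto z_1+z_2j$ of $\mathbb{C}^2$ onto $\mathbb{H}$ --- is exactly the kind of concrete bookkeeping that makes item (2), the property actually used later in the construction of $f_{S^4}$, transparent. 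Two small points you may wish to make explicit: first, the paper's ``linear bundle'' has structure group acting linearly on $S^2\subset\mathbb{R}^3$, so you should note that the projectivization of the tautological $\mathbb{C}^2$-bundle admits a reduction of its structure group from $PGL(2,\mathbb{C})$ to $PU(2)\cong SO(3)$ (choose a hermitian metric), which matches the linear structure assumed in the theorem's hypothesis; second, the word ``dual'' in item (4) is most naturally read as the Kronecker dual with respect to the chosen generators of $H_2$ and $H_4$, whereas you argue with Poincar\'e duals --- in ${\mathbb{C}P}^3$ with the complex orientations these coincide (the Kronecker dual of $[{{\mathbb{C}P}^1}_{{\mathbb{C}P}^2}]$ is $h=\mathrm{PD}[{{\mathbb{C}P}^2}_{{\mathbb{C}P}^3}]$ because the two submanifolds meet transversally in one positive point), so your argument does yield the stated ring relations, but a sentence recording this translation would close the loop.
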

\subsection{A proof of Main Theorem.}
We consider a submersion obtained by composing the canonical projection from ${\mathbb{C}P}^3 \times S^1$ to ${\mathbb{C}P}^3$ with ${\pi}_{{\mathbb{C}P}^3}:{\mathbb{C}P}^3 \rightarrow S^4$ in Theorem \ref{thm:3}.
We have a $7$-dimensional, closed and simply-connected manifold $M$ and a fold map $f_{S^4}:M \rightarrow S^4$ by exchanging the restriction to the preimage of a smoothly embedded $4$-dimensional standard disk $D$ so that the following properties hold. 
Let $D^{\prime} \subset S^4$ be a smoothly embedded $4$-dimensional standard disk containing the disk $D$ in the interior.
\begin{itemize}
\item $f_{S^4}$ is a fold map satisfying the following properties.
\begin{itemize}
	\item The singular set $S(f_{S^4})$ is diffeomorphic to $S^3$.
	\item $f_{S^4} {\mid}_{S(f_{S^4})}$ is an embedding.
	\item $f_{S^4}(S(f_{S^4}))=\partial D^{\prime}$.
	\item The preimage of a small closed tubular neighborhood of $\partial D^{\prime}$ in $S^4$ is regarded as a trivial bundle. The structure of the bundle is given by the composition of the restriction of $f_{S^4}$ with the canonical projection to $\partial D^{\prime}$.
	\end{itemize}
\item The preimage of a regular value of each connected component of $S^4-f_{S^4}(S(f_{S^4}))$ is $S^3$ and $S^2 \times S^1$, respectively. 
\end{itemize}
Note that in the second property, (a copy of) $S^3$ is regarded as a manifold obtained by a so-called handle attachment to (a copy of) $S^2 \times S^1$. In other words, we attach a {\it 3-handle} $D^3 \times D^1$ along $S^2 \times \{\ast\} \subset S^2 \times S^1$, respecting the structure of a Morse function. See \cite{milnor2} for example for related theory.
The following show topological properties of $M$. 
Here we fix a suitable basis or a generating set for $H_j(M;\mathbb{Z})$ to define the duals of some homology classes. In fact it seems to be not so difficult to guess that $H_j(M;\mathbb{Z})$ is free and of rank $0$ or $1$ and we show this later.
We can see the properties by investigating the presented two properties of the map before.

\begin{itemize}
\item There exists a complex projective plane ${{\mathbb{C}P}^2}_{M} \subset M$ being also a submanifold such that ${f_{S^4}} {\mid}_{{{\mathbb{C}P}^2}_{M}}$ gives a diffeomorphism on the preimage of the disk $D$ in $S^4$.

\item The integral homology class represented by the submanifold $S^2 \times \{\ast\} \subset S^2 \times S^1$ of the preimage of a regular value diffeomorphic to $S^2 \times S^1$ is a generator of a subgroup of rank $1$ of $H_2(M;\mathbb{Z})$ where $S^2 \times \{\ast\}$ is oriented suitably.
The integral homology classes represented by the preimages of regular values are a generator of a subgroup of rank 1 of $H_3(M;\mathbb{Z})$. The square of the dual of the integral homology class represented by $S^2 \times \{\ast\}$ with the orientation before is the dual of the integral homology class represented by the complex projective plane ${{\mathbb{C}P}^2}_{M} \subset M$ with a suitable orientation.
\item $M$ is spin.
\item The 1st Pontryagin class of $M$, which is oriented suitably, is $4$ times the dual of the fundamental class of the complex projective plane ${{\mathbb{C}P}^2}_{M} \subset M$ with the orientation before. 
\end{itemize}

We take the union of two smoothly and disjointly embedded $4$-dimensional standard disks $D_1:=D$ and $D_2$ so that they are mutually in distinct connected components of the regular value set of $f_{S^4}$. 
By composing the restriction of $f_{S^4}$ to the preimage ${f_{S^4}}^{-1}(D_1 \sqcup D_2)$ with a natural $2$-fold covering over a $4$-dimensional standard disk $D_0$, we have a trivial smooth bundle over $D_0$ whose fiber is diffeomorphic to $S^3 \sqcup (S^2 \times S^1)$. We consider the projection and embed the base space as the space $\{x \in {\mathbb{R}}^4 \mid  |x| \leq r_0\}$ for a positive integer $r_0>0$. Let $\tilde{f_{r_0}}$ denote the resulting submersion.
The restriction of $f_{S^4}$ to ${f_{S^4}}^{-1}(S^4-{\rm Int}\ (D_1 \sqcup D_2))$ is regarded as the product map of a Morse function with exactly one singular point and the identity map on ${\rm id}_{S^3}$. 
On the manifold of the domain of this function, we can construct the product map of a Morse function $\tilde{f}$ and the identity map on ${\rm id}_{S^3}$. Furthermore, we can construct the Morse function $\tilde{f}$ satisfying the following properties.
\begin{itemize}
\item The function is a function on a manifold diffeomorphic to a manifold obtained by removing the interior of a $4$-dimensional standard disk smoothly embedded in the interior of $S^2 \times D^2$.
\item The minimum is $r_0$.
\item The preimage of the minimum $r_0$ of the function coincides with the boundary and contains no singular points.
\item The function has exactly three singular points and at distinct singular points, the values are distinct.
\end{itemize}
We can define the function $\tilde{f}$ as one to the half-closed interval $[r_0,+\infty) \subset \mathbb{R}$. Thanks to the structures of manifolds and maps, we can glue the maps $\tilde{f_{r_0}}$ and the product map of $\tilde{f}$ and the identity map on the boundary of $\{x \in {\mathbb{R}}^4 \mid  |x| \leq r_0\}$ to obtain a global fold map on the original manifold $M$ into ${\mathbb{R}}^4$.

Furthermore, the topological properties of $M$ and $f_{S^4}:M \rightarrow S^4$ presented before together with fundamental propositions on 1st Pontryagin classes enable us to construct a similar fold map on a manifold whose integral cohomology ring is isomorphic to that of $M$, which is oriented suitably, and whose 1st Pontryagin class is $4k$ times a generator of the 4th integral homology group, isomorphic to $\mathbb{Z}$, for an arbitrary integer $k$. We need to change the way we glue the trivial bundle over $D_0$ whose fiber is diffeomorphic to $S^3$. For Pontryagin classes see \cite{milnorstasheff} again. See also Theorems 3 and 4 of \cite{kitazawa6}.
We can see that $M$ is simply-connected and that the $j$-th integral homology group $H_j(M;\mathbb{Z})$ of $M$ is isomorphic to zero for $j=1,6$ and $\mathbb{Z}$ for $j=2,3,4,5$. For example, we can see that by composing a suitable diffeomorphism on ${\mathbb{R}}^4$ and a canonical projection to $\mathbb{R}$, we have a Morse function with exactly six singular points and this with the presented properties of $M$ shows this. We can see that the integral cohomology ring of $M$ is isomorphic to that of ${\mathbb{C}P}^2 \times S^3$. 
This yields our Main Theorem as follows.

\begin{Thm}
\label{thm:4}
There exist an infinite family $\{M_k\}_{k \in \mathbb{Z}}$ of closed and simply-connected spin {\rm (}oriented{\rm )} manifolds whose integral cohomology rings are isomorphic to that of ${\mathbb{C}P}^2 \times S^3$ such that the 1st-Pontryagin class of $M_k$ is $4k$ times a generator of $H^4(M_k;\mathbb{Z}) \cong \mathbb{Z}$ and a family of round fold maps $\{f_k:M_k \rightarrow {\mathbb{R}}^4\}$ satisfying the following properties.
\begin{enumerate}
\item The singular set of each map consists of exactly three connected components. More precisely, we can construct the maps such that the restrictions to the singular sets are embeddings and that the images are $\{x \in {\mathbb{R}}^4 \mid ||x||=1,2,3\}$.
\item For each map, the preimage of a regular value in each connected component of the regular value set in the image is diffeomorphic to $S^3$, $S^2 \times S^1$ and $S^3 \sqcup S^2 \times S^1$, respectively.
\end{enumerate}  
\end{Thm}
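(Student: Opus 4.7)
My plan is to realize each $f_k$ in three stages: build a fold map $f_{S^4}: M \to S^4$ on a model manifold $M$, convert it to a round fold map into $\mathbb{R}^4$, and finally twist a gluing parameter to produce the infinite family. For the first stage I would start with the projection $\pi_{{\mathbb{C}P}^3} \circ \mathrm{pr}_1: {\mathbb{C}P}^3 \times S^1 \to S^4$, a submersion with fiber $S^2 \times S^1$, and fix nested smoothly embedded standard disks $D \subset D' \subset S^4$. Over $D$ I would replace the $S^2 \times S^1$ fiber by $S^3$, viewing the change of fiber as a fibered surgery that attaches a $3$-handle along $S^2 \times \{*\} \subset S^2 \times S^1$, and arrange the transition over the annulus $D' \setminus \mathrm{Int}\, D$ to produce a single fold singular sphere whose image under $f_{S^4}$ is the embedded sphere $\partial D'$. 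This yields $M$ together with the fold map $f_{S^4}$ having the properties listed in the excerpt, and the residual ${\mathbb{C}P}^2$ inside ${\mathbb{C}P}^3 \times \{*\}$ descends to an embedded ${{\mathbb{C}P}^2}_M \subset M$ mapping diffeomorphically onto $D$.

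For the second stage I would choose two disjoint smoothly embedded standard disks $D_1 = D$ and $D_2$, one in each of the two regular-value components of $S^4 \setminus f_{S^4}(S(f_{S^4}))$. The preimage of $D_1 \sqcup D_2$ is a trivial smooth bundle over a $4$-disk $D_0$ with fiber $S^3 \sqcup (S^2 \times S^1)$, obtained via a two-fold cover; I would embed this $D_0$ into $\mathbb{R}^4$ as the closed ball of radius $r_0$. Outside this ball I would use the product of a Morse function $\tilde{f}$ on a suitable $4$-manifold with $\mathrm{id}_{S^3}$: the $4$-manifold is obtained from $S^2 \times D^2$ by removing the interior of a smoothly embedded $4$-disk, and $\tilde{f}$ is arranged to have exactly three critical points at distinct levels, producing fold singular spheres at radii $1, 2, 3$, with minimum $r_0$ attained on the boundary. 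Smoothly gluing the inner and outer pieces across $\{|x| = r_0\}$ via the fixed trivializations produces the required round fold map $M \to \mathbb{R}^4$ satisfying properties (1) and (2).

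To produce the family $\{M_k\}_{k \in \mathbb{Z}}$, I would modify the gluing of the trivial $S^3$-bundle factor near the boundary of $D_0$ by a self-diffeomorphism of $S^3$ representing a prescribed class in $\pi_3(\mathrm{SO}(4)) \cong \mathbb{Z}^2$. Such twists change the diffeomorphism type of the total space while leaving both ${{\mathbb{C}P}^2}_M$ and the fiber $S^2 \times \{*\}$ (giving the $H_2$-generator) inside the untwisted region, so the geometric identity $\mathrm{PD}[S^2 \times \{*\}]^2 = \mathrm{PD}[{{\mathbb{C}P}^2}_M]$ continues to hold and the integral cohomology ring remains isomorphic to that of ${\mathbb{C}P}^2 \times S^3$. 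By Theorems 3 and 4 of \cite{kitazawa6}, the twist shifts the first Pontryagin class by a controllable integer multiple of $4$ times a generator of $H^4$, so a suitable choice of class realizes every integer $k$. Simple-connectedness and the integral homology groups of each $M_k$ follow from the handle decomposition read off by composing $f_k$ with the radial function $|x|$, which becomes a Morse function with exactly six critical points of indices $0, 2, 3, 4, 5, 7$.

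The hardest parts of the plan will be, in order of increasing difficulty, matching the inner submersion $\tilde{f_{r_0}}$ with the outer product $\tilde{f} \times \mathrm{id}_{S^3}$ across $\{|x| = r_0\}$ so that the result is a genuine smooth fold map; realizing the $S^2 \times S^1 \leadsto S^3$ fiber transition in the first stage as a single fold singular sphere with embedded image $\partial D'$; and pinning down the cup-product ring structure of $M_k$, since as noted in the introduction the integer cohomology groups alone do not distinguish $H^*({\mathbb{C}P}^2 \times S^3; \mathbb{Z})$ from that of a connected sum of products of spheres. The geometric self-intersection of the $2$-class inside the model neighborhood of $D$, which directly mirrors the ring relation in ${\mathbb{C}P}^2$, will be the decisive input for the last of these.
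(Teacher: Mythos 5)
Your proposal follows essentially the same route as the paper: the submersion ${\mathbb{C}P}^3\times S^1\to S^4$, the fibered replacement of an $S^2\times S^1$ fiber by $S^3$ over $D\subset D'$ creating one fold sphere over $\partial D'$, the trivial bundle with fiber $S^3\sqcup (S^2\times S^1)$ over a disk $D_0$ glued to the product of a three-critical-point Morse function on $(S^2\times D^2)-{\rm Int}\,D^4$ with ${\rm id}_{S^3}$, the twisting of the trivial $S^3$-bundle gluing (using Theorems 3 and 4 of \cite{kitazawa6}) to realize $p_1=4k$ times a generator, and the geometric relation ${\rm PD}[S^2\times\{\ast\}]^2={\rm PD}[{{\mathbb{C}P}^2}_M]$ for the ring structure. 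The one detail to correct: to read off simple-connectedness and the homology you should compose the round fold map with a linear projection to $\mathbb{R}$ (after a diffeomorphism of ${\mathbb{R}}^4$), as the paper does, rather than with $||x||$, since the radial distance is constant on each fold image sphere and hence every fold point is a critical point of the composition, so it is not a Morse function with six isolated critical points.
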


See also FIGURE \ref{fig:1}.

\begin{figure}

\includegraphics[width=40mm]{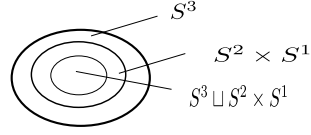}
\caption{The image and the preimages of regular values of a round fold map in Theorem \ref{thm:4}: circles represent the singular value set and $3$-dimensional spheres.}
\label{fig:1}
\end{figure}

Theorem \ref{thm:1} yields the following corollary.
\begin{Cor}
\label{cor:1}

A $7$-dimensional, closed and simply-connected spin manifold whose integral cohomology ring is isomorphic to that of ${\mathbb{C}P}^2 \times S^3$ always admits a round fold map into ${\mathbb{R}}^4$.
\end{Cor}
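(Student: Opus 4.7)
The plan is to combine the explicit construction of Theorem 4 with the classification in Theorem 1 and the existence of round fold maps on 7-dimensional homotopy spheres supplied by Theorem 2. Given a 7-dimensional closed, simply-connected spin manifold $X$ whose integral cohomology ring is isomorphic to that of $\mathbb{C}P^2 \times S^3$, Theorem 1(1) gives an integer $k$ such that the 1st Pontryagin class of $X$ equals $4k$ times a generator of $H^4(X;\mathbb{Z})$. Theorem 4 then furnishes a manifold $M_k$ of the same cohomological and Pontryagin type together with a round fold map $f_k:M_k \rightarrow \mathbb{R}^4$, and Theorem 1(2) identifies $X$ with a connected sum $M_k \# \Sigma$ for some 7-dimensional homotopy sphere $\Sigma$. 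By Theorem 2, $\Sigma$ itself admits a round fold map $g:\Sigma \rightarrow \mathbb{R}^4$.

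It remains to produce a round fold map on $M_k \# \Sigma = X$ from $f_k$ and $g$. The natural idea is to perform the connected sum over a regular value of $f_k$ lying in the innermost component of the regular value set, where the preimage is a single copy of $S^3$ and $f_k$ is locally modeled by the projection of a trivial $S^3$-bundle over a small $4$-dimensional disk $D$. After rescaling $g$ so that its entire image lies in $D$, I would remove a small open $7$-disk from each of $M_k$ and $\Sigma$, both sitting inside a single $S^3$-fiber and both mapping into $D$, and glue the resulting boundary $6$-spheres by the standard identification. The maps $f_k$ and the rescaled $g$ can then be glued to a single fold map on $M_k \# \Sigma$, whose singular value set is the disjoint union of the concentric spheres coming from $f_k$ and those coming from the rescaled $g$.

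The main obstacle is to ensure that this local gluing can be carried out so that the resulting map is genuinely a fold map and that its singular value set, after a suitable diffeomorphism of $\mathbb{R}^4$, becomes a union of concentric spheres centered at the origin. One must match the fold-model of $g$ near the chosen removed disk with the trivial bundle model of $f_k$, but both sides are standard and the matching reduces to a local calculation near an $S^3$-fiber of a trivial bundle. Once the gluing is done, a radial diffeomorphism of $\mathbb{R}^4$ that pushes the inner spheres produced by the rescaled $g$ toward the origin repositions them together with the three singular value spheres of $f_k$ into a single nested system of concentric spheres, yielding a round fold map on $X$ and proving Corollary 1.
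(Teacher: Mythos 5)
Your reduction (Theorem \ref{thm:1} plus Theorem \ref{thm:4} plus Theorem \ref{thm:2}, so that it suffices to build a round fold map on $M_k \# \Sigma$) is exactly the paper's, but the step you defer as ``a local calculation near an $S^3$-fiber'' is the actual content of the proof, and your version of it does not work as stated. If you remove small $7$-disks around regular points $p \in M_k$ and $p' \in \Sigma$ and glue the boundary $6$-spheres by the standard connected-sum identification, the maps $f_k$ and (rescaled) $g$ do not fit together smoothly: in local coordinates both are projections $(x,y)\mapsto x$ on ${\mathbb{R}}^4\times{\mathbb{R}}^3$, and on the neck $S^6\times(-\delta,\delta)$ of the connected sum the glued map has the form $(v,t)\mapsto q+(\varepsilon+|t|)x$, which is not even $C^1$ along the glued $S^6$. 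Any smoothing of this (e.g.\ replacing $\varepsilon+|t|$ by a positive even function) forces a rank drop along the equatorial $S^3\times\{0\}$ where $y=0$, i.e.\ it creates a new fold sphere whose image is a small $3$-sphere around $q$. So your assertion that the resulting singular value set is just the disjoint union of the spheres of $f_k$ and those of the rescaled $g$ is false for this construction, and you have given no argument that the glued map is a fold map at all, nor that the manifold obtained is really $M_k\#\Sigma$ rather than something else; a minor additional slip is that by Theorem \ref{thm:4} the preimage over the central component of the regular value set is $S^3 \sqcup (S^2\times S^1)$, not a single $S^3$.

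The paper avoids all of this by never gluing at points: from $M_k$ it removes the $S^3$-component of the preimage over a small $4$-disk in the central region, i.e.\ a trivial bundle piece $D^4\times S^3$, and from $\Sigma$ it removes the preimage of a neighborhood of the outermost fold sphere together with the exterior of the image; since that preimage is a tubular neighborhood of an unknotted $3$-sphere, the remaining part of $\Sigma$ is again $D^4\times S^3$ carrying the restriction of the round fold map. Both pieces are trivial $S^3$-bundles near their $S^3\times S^3$ boundaries, with the maps being the bundle projections there, so gluing by a bundle isomorphism automatically yields a smooth fold map whose singular value set is the nested family of spheres (hence round), and one checks that the glued manifold is the connected sum. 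To repair your argument you would either have to reproduce this bundle-level surgery or genuinely prove a ``connected sum of fold maps at regular points'' lemma with control of the new singularities; the latter is not standard and is precisely what is missing.
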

\begin{proof}
Theorem \ref{thm:1} implies that a $7$-dimensional, closed, simply-connected and spin manifold whose integral cohomology ring is isomorphic to that of ${\mathbb{C}P}^2 \times S^3$ is always represented as a connected sum of a manifold in Theorem \ref{thm:4} and a $7$-dimensional homotopy sphere. \cite{kitazawa, kitazawa2, kitazawa5}
 show construction of a new round fold map from a round fold map in Theorem \ref{thm:4} and a round fold map in Theorem \ref{thm:2}. We take a $4$-dimensional standard disk in the connected component of the regular value set of the former map in the center and remove the interior of a connected component, regarded as the total space of a trivial smooth bundle over the disk whose fiber is $S^3$ of the preimage. We remove (the preimage of) the union of the interior of a small closed tubular neighborhood of the outermost connected component of the singular value set of the latter map and the complementary set of the image. We glue the two obtained maps in a suitable way to obtain a round fold map on the desired manifold. 

We give a more precise exposition. Let $M_1$ denote the manifold of the domain of the former map. Let $M_2$ denote the homotopy sphere of the domain of the latter map. The preimage of the interior of a small closed tubular neighborhood of the outermost connected component of the singular value set of the latter map is regarded as a small closed tubular neighborhood $N_0(S)$ of a smoothly embedded $3$-dimensional standard sphere in $M_2$. The closed tubular neighborhood is a trivial linear bundle whose fiber is diffeomorphic to $D^4$ and it can be regarded as a subbundle of a normal bundle of a sphere in the interior of a $7$-dimensional standard disk smoothly embedded in $M_2$ after we move the $3$-dimensional sphere via a suitable smooth isotopy. The $3$-dimensional sphere is embedded there as a so-called {\it unknot} in the smooth category. $M_2-{\rm Int}\ N_0(S)$ is diffeomorphic to $D^4 \times S^3$ and regarded as the total space of a trivial smooth bundle over $D^4$ whose fiber is $S^3$. 

From $M_1$, the total space of a trivial smooth bundle over $D^4$ whose fiber is diffeomorphic to $S^3$ is removed as before. Here $D^4$ is regarded as a smoothly embedded disk in the connected component of the regular value in the center. By gluing the resulting compact and connected manifolds obtained from $M_1$ and $M_2$ by a suitable isomorphism between the trivial smooth bundles defined canonically on the boundaries, we have a new manifold $M$ and a round fold map $f:M \rightarrow {\mathbb{R}}^4$.


Furthermore, we can have the manifold $M$ which is represented as a connected sum of the original two manifolds. 

We also give an additional exposition on round fold maps in Theorem \ref{thm:2} (\ref{thm:2.2}). We can also have a fold map on every $7$-dimensional homotopy sphere in (\ref{thm:2.1}) from two maps in (\ref{thm:2.2}) in the presented way. In fact every $7$-dimensional homotopy sphere is represented as a connected sum of two oriented homotopy spheres of the 16 types in (\ref{thm:2.2}) according to the well-known theory of $7$-dimensional homotopy spheres.

For the construction here, see also FIGUREs \ref{fig:2}, \ref{fig:3} and \ref{fig:4}. \cite{kobayashisaeki} has also motivated the author to find the methods in \cite{kitazawa, kitazawa2, kitazawa5} and here. 

This completes the proof. 
\end{proof}

\begin{figure}

\includegraphics[width=70mm]{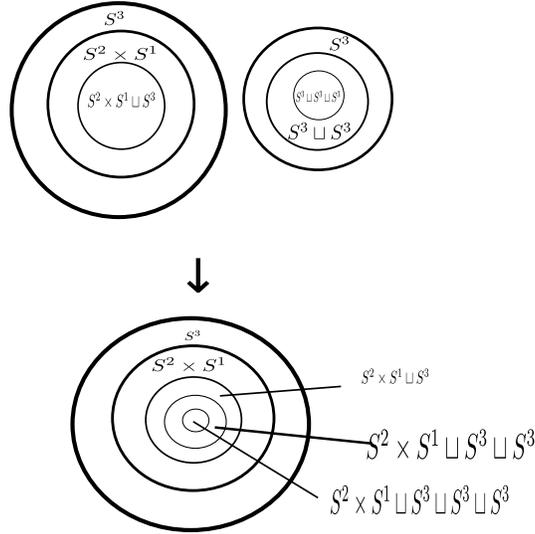}
\caption{Construction of a round fold map on the manifold represented as a connected sum of the two manifolds admitting fold maps in Corollary \ref{cor:1}: for example manifolds represent preimages of regular values in the connected components of the regular value sets. More precisely, we attach maps presented in FIGURE \ref{fig:3} and FIGURE \ref{fig:4} to obtain the desired map.}
\label{fig:2}
\end{figure}
\begin{figure}

\includegraphics[width=25mm]{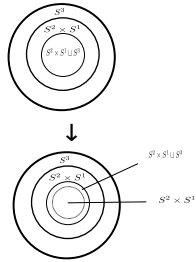}
\caption{The image of a round fold map in Theorem \ref{thm:4}. The image of the map obtained by removing a connected component of the preimage of the interior of a $4$-dimensional standard disk embedded smoothly in the innermost connected component of the regular value set.}
\label{fig:3}
\end{figure}
\begin{figure}

\includegraphics[width=25mm]{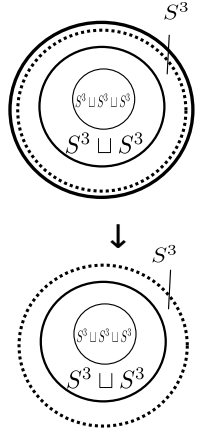}
\caption{The image of a round fold map in Theorem \ref{thm:2}. The image of the map obtained by removing the union of the complementary set of the image and the interior of a small closed tubular neighborhood of the outermost connected component of the singular value set (and the preimage).}
\label{fig:4}
\end{figure}
\begin{Rem}
\label{rem:1}
The author believes that we can prove the following fact: $7$-dimensional manifolds of Theorem \ref{thm:4} or Main Theorem do not admit special generic maps into ${\mathbb{R}}^n$ ($n=1,2,3,4,5,6$) or more generally, ones satisfying the following conditions into this Euclidean space.
\begin{itemize}
\item The index of each singular point is $0$ or $1$.
\item Each connected component of each preimage contains at most $1$ singular point.
\item Preimages of regular values are always disjoint unions of standard spheres. 
\end{itemize}
Note also that fold maps in Theorem \ref{thm:2} and Theorem \ref{thm:5}, which is in the next section, satisfy these properties.
\end{Rem}
\section{Appendices.}
\label{sec:3}
\subsection{Fold maps in Theorem \ref{thm:2} and ones on manifolds represented as connected sums of total spaces of smooth bundles over the standard sphere of a fixed dimension whose fibers are standard spheres.}
\label{subsec:3.1}
Note also that fold maps in Theorem \ref{thm:2} have been obtained as specific examples of fold maps on manifolds represented as connected sums of total spaces of smooth bundles over the standard sphere of a fixed dimension whose fibers are standard spheres. See \cite{kitazawa, kitazawa2, kitazawa5} for example. Related to this fact, manifolds represented as connected sums of manifolds represented as products of standard spheres admit special generic maps into suitable Euclidean spaces whose dimensions are smaller or equal to the dimensions of the manifolds in considerable cases:
 for example a manifold represented as a connected sum of finitely many copies of $S^{n-1} \times S^{m-n+1}$ admits a special generic map into ${\mathbb{R}}^n$.
  We can construct the map so that the restriction to the singular set is an embedding and that the image is a manifold represented as a boundary connected sum of finitely many copies of $S^{n-1} \times D^1$.

\subsection{Further examples of fold maps on $7$-dimensional closed and simply-connected manifolds.}
\label{subsec:3.2}
We present the following recent result, presenting explicit fold maps on infinitely many $7$-dimensional closed and simply-connected manifolds. Their integral cohomology rings are mutually non-isomorphic. These manifolds are not represented as connected sums of manifolds represented as products of standard spheres as before. We can know this from the structures of the cohomology rings. 
\begin{Thm}[\cite{kitazawa6, kitazawa7}]
\label{thm:5}
Let $\{G_j\}_{j=0}^7$ be a sequence of free and finitely generated commutative groups such that $G_0=G_7=\mathbb{Z}$, that $G_j$ is zero for $j=1,6$, that $G_j$ and $G_{7-j}$ are mutually isomorphic for $j=2,3$ and that the rank of $G_2$ is smaller than or equal to that of $G_4$. 

Then we have the following three.

\begin{enumerate}
\item There exist a $7$-dimensional closed and simply-connected spin manifold $M$ such that $H_j(M;\mathbb{Z})$ is isomorphic to $G_j$ and a fold map $f:M \rightarrow {\mathbb{R}}^4$ satisfying the following properties.
\begin{enumerate}
\item The index of each singular point is always $0$ or $1$.
\item For each connected component of the regular value set of $f$, the preimage of a regular value in each connected component is, empty, diffeomorphic to $S^3$, diffeomorphic to $S^3 \sqcup S^3$, or diffeomorphic to $S^3 \sqcup S^3 \sqcup S^3$.
\end{enumerate}
\item
Furthermore, if $G_2$ and $G_4$ are non-trivial groups, then there exists a family $\{M_{\lambda}\}_{\lambda \in \Lambda}$ of infinitely many $7$-dimensional closed and simply-connected manifolds admitting the fold maps as before such that $H_j(M_{\lambda};\mathbb{Z})$ is isomorphic to $G_j$ and that the integral cohomology rings of $M_{{\lambda}_1}$ and $M_{{\lambda}_2}$ are not isomorphic for distinct ${\lambda}_1,{\lambda}_2 \in \Lambda$.
\item 
Furthermore, the integral cohomology rings of the obtained manifolds cannot be isomorphic to those of Theorem \ref{thm:4}{\rm :} the square of every 2nd integral cohomology class must be divisible by 2.
\end{enumerate}
\end{Thm}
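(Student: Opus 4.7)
The plan is to build each manifold $M$ as a connected sum of simply-connected spin ``building blocks'' whose round fold maps into $\mathbb{R}^4$ are already understood, and then to assemble the round fold map on $M$ by the gluing scheme used in the proof of Corollary~\ref{cor:1}. Set $r_j := \mathrm{rank}\, G_j$; the hypotheses give $r_3=r_4$ and $r_2 \leq r_4$. For contributions to $H_2 \oplus H_5$ I would use copies of $S^2 \times S^5$; for contributions to $H_3 \oplus H_4$ I would use copies of $S^4 \times S^3$ together with, when needed, linear $S^3$-bundles over $S^4$ (these are spin and admit round fold maps by Theorem~\ref{thm:2}~(\ref{thm:2.2})). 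A connected sum of $r_2$ copies of $S^2 \times S^5$ and $r_4$ copies of the relevant $S^3$-bundles over $S^4$ is simply-connected, spin, and has the prescribed groups $G_j$.

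To promote the elementary round fold maps to a single round fold map on $M$, I would apply the gluing trick from Corollary~\ref{cor:1} once per summand: in one factor remove the preimage of a small embedded $D^4$ in a connected component of the regular value set (the removed piece is a trivial $D^4 \times S^3$), in the other remove the preimage of a collar of the outermost singular sphere, and match along the common boundary. The resulting map is again round, its singular value set still consists of concentric $3$-spheres, and the outermost preimage simply gains one more copy of $S^3$. Iterating across all summands, the final round fold map satisfies the index and preimage properties demanded in part~(1).

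For part~(2), when $r_2, r_4 \geq 1$ I would produce infinitely many non-isomorphic cohomology rings by varying an integer clutching parameter in the attaching data at the innermost singular sphere, thereby altering the triple cup product that pairs degree-two and degree-four generators into $H^7 \cong \mathbb{Z}$ while preserving the groups $G_j$; this is the same parameter used in \cite{kitazawa6, kitazawa7}. Part~(3) is then essentially automatic: in each building block the square of every degree-two generator vanishes (note $H^4(S^2 \times S^5)=0$), and under connected sum the cup product of classes coming from different summands vanishes, so $a^2 = 0$ in $H^4(M;\mathbb{Z})$ for every $a \in H^2(M;\mathbb{Z})$, which is certainly divisible by $2$; this contrasts sharply with Theorem~\ref{thm:4}, where a generator of $H^2$ squares to a generator of $H^4$.

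The main obstacle I foresee is the simultaneous control, at each gluing step, of (i) the index-$0$-or-$1$ condition on the new fold map, (ii) the freeness and exact rank of each $H_j(M;\mathbb{Z})$, and (iii) preservation of the spin structure. Point (ii) is handled by a Mayer--Vietoris computation at each gluing; (iii) follows because every building block and every attaching diffeomorphism is spin; but (i) requires checking that gluing two round fold maps does not introduce higher-index singular points, which is essentially the content of the explicit constructions in \cite{kitazawa, kitazawa2, kitazawa5} and is where I expect the most bookkeeping.
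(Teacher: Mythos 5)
Your proposal has a genuine gap, and it is concentrated in parts (2) and (3). A connected sum of copies of $S^2 \times S^5$ and of (Euler number zero) linear $S^3$-bundles over $S^4$ has \emph{identically vanishing} cup product $H^2 \otimes H^2 \rightarrow H^4$: squares vanish inside each summand (indeed $H^4(S^2\times S^5;\mathbb{Z})=0$ and $H^2$ of the $S^3$-bundles over $S^4$ is zero), and products of classes from different summands vanish. Since $H^6=0$ and the pairings $H^2\otimes H^5\rightarrow H^7$, $H^3\otimes H^4\rightarrow H^7$ are forced by Poincar\'e duality into a standard unimodular form, \emph{all} such connected sums with the prescribed groups $G_j$ have isomorphic integral cohomology rings. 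Hence your framework cannot produce the infinite family of part (2). Your suggested fix --- varying a clutching parameter to change ``the triple cup product that pairs degree-two and degree-four generators into $H^7$'' --- does not work: the degrees do not even add up to $7$, and any triple product $H^2\cdot H^2\cdot H^3\rightarrow H^7$ factors through the product $H^2\cdot H^2\subset H^4$, which you yourself argue is zero in part (3). Note also that the theorem's part (3) asserts squares of degree-two classes are \emph{divisible by $2$}, not zero; the manifolds of \cite{kitazawa6,kitazawa7} have in general nonzero even squares, and it is exactly this varying even quadratic structure that yields infinitely many non-isomorphic rings in part (2). This is also why the hypothesis $\mathrm{rank}\,G_2\leq \mathrm{rank}\,G_4$ appears, a hypothesis your construction never uses --- a sign that the intended manifolds are not connected sums of sphere products (the paper says such connected sums occur only ``in very specific cases'').

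There is a second, more technical problem in part (1). The gluing trick of Corollary \ref{cor:1} produces the connected sum precisely because the second manifold minus the removed piece is again $D^4\times S^3$ (for a homotopy $7$-sphere and an unknotted fiber $S^3$, or for an $S^3$-bundle over $S^4$ split into two copies of $D^4\times S^3$). For a summand $S^2\times S^5$ this fails: $S^2\times S^5$ minus a trivialized $D^4\times S^3$ is not $D^4\times S^3$, and gluing two fold maps fiberwise along $S^3\times S^3$ in general yields a ``base connected sum'' of the $S^3$-fibered structures rather than the connected sum of the total spaces (compare: gluing two copies of $S^4\times S^3$ this way can return $S^4\times S^3$, not $(S^4\times S^3)\#(S^4\times S^3)$). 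Moreover it is not clear that $S^2\times S^5$ admits a \emph{round} fold map into $\mathbb{R}^4$ with only $S^3$ fibers at all --- its natural special generic map into $\mathbb{R}^4$ has image $S^2\times D^2$, whose singular value set is not a union of concentric spheres --- so the roundness you invoke at every iteration is not available for these summands (the theorem itself does not require roundness, but your gluing argument does). So both the realization of nonzero $G_2$ in part (1) and the whole of part (2) need a genuinely different construction, e.g.\ fold maps assembled from $S^3$-bundle pieces over compact $4$-dimensional base pieces with nontrivial second homology, which is the direction taken in \cite{kitazawa6,kitazawa7}.
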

In very specific cases, these manifolds are obtained as ones represented as connected sums of manifolds represented as products of two standard spheres as before.

As Theorem \ref{thm:4}, this result is also seen as one capturing the integral cohomology rings of $7$-dimensional closed and simply-connected manifolds of suitable families via explicit fold maps on them into ${\mathbb{R}}^4$.
\subsection{Remarks on the existence of fold maps.}
\label{subsec:3.3}
Some theory of Eliashberg \cite{eliashberg,eliashberg2} and
related theory of Ando \cite{ando} tell us that from the tangent bundle of a closed manifold we can know whether the manifold admits a fold map into a Euclidean space. For a closed and orientable manifold $M$,
if the Whitney sum of the tangent bundle and a trivial linear bundle over $M$ whose fiber is $\mathbb{R}$ is trivial, then $M$ admits a fold map into Euclidean spaces whose dimensions are smaller than or equal to the dimension of $M$.
For example, a closed and connected manifold obtained by taking connected sums and products one after another starting from (a family of finitely many) standard spheres satisfies the condition. Subsections \ref{subsec:3.1} and \ref{subsec:3.2} say that such manifolds and manifolds of some wider classes have very explicit fold maps in considerable cases.
 Knowing the existence of explicit fold maps and their construction is essentially different from knowing the existence of fold maps only.

It has been difficult to know the existence of fold maps for more general manifolds. As an explicit case, studies concerning the existence of fold maps on real and complex projective spaces have been difficult. For several answers, see \cite{ohmotosaekisakuma, sadykovsaekisakuma, saeki4, saekisakuma}. For example, from these studies, we can know that ${\mathbb{C}P}^2$ does not admit fold maps into ${\mathbb{R}}^n$ for $n=2,3,4$. Remember also pioneering theory \cite{thom, whitney} on generic maps into the plane on closed manifolds whose dimensions are greater than or equal to $2$. 

${\mathbb{C}P}^2$ can be embedded into ${\mathbb{R}}^8$ and by considering a normal bundle or a closed tubular neighborhood, we have a linear bundle over ${\mathbb{C}P}^2$ whose fiber is $S^3$. The total space is an embedded submanifold of ${\mathbb{R}}^8$. By a fundamental argument, it is a $7$-dimensional closed, simply-connected and spin manifold and its integral cohomology ring is shown to be isomorphic to that of ${\mathbb{C}P}^2 \times S^3$. We can also know that its tangent bundle satisfies the previous condition and that it admits a fold map into ${\mathbb{R}}^n$ for $n=1,2,3,4,5,6,7$.
Furthermore, our Main Theorem tells the existence of a very explicit fold map into ${\mathbb{R}}^4$ on the $7$-dimensional manifold.

We present another explicit remark. The 2-dimensional real projective space admits no fold maps into ${\mathbb{R}}^2$ by virtue of \cite{thom,whitney}. We have a $3$-dimensional closed and orientable manifold as the total space of a smooth bundle over the projective space whose fiber is diffeomorphic to $S^1$. This is a so-called {\it graph manifold}. Graph manifolds form an important class of $3$-dimensional closed and orientable manifolds including (orientable) total spaces of smooth bundles over closed surfaces whose fibers are circles and the classes of so-called {\it Lens spaces} and {\it Seifert manifolds} for example. Every graph manifold admits a fold map into ${\mathbb{R}}^2$ such that the restriction to the singular set is an embedding by \cite{saeki3}. \cite{kitazawasaeki} announces that this admits a round fold map into ${\mathbb{R}}^2$. Main Theorem (together with Theorem \ref{thm:1}) may be regarded as a complexified version of this fact.
\section{Acknowledgment.}
\label{sec:4}
\thanks{This work was supported by "The Sasakawa Scientific Research Grant" (2020-2002 : https://www.jss.or.jp/ikusei/sasakawa/). The author is a member of JSPS KAKENHI Grant Number JP17H06128 "Innovative research of geometric topology and singularities of differentiable mappings" (Principal Investigator: Osamu Saeki).}


The author declares that all data essentially supporting the present study are in the present paper. 

\end{document}